\newcommand{\address}[1]{\gdef\@address{#1}}
\newcommand{\email}[1]{\gdef\@email{\url{#1}}}
\newcommand{\@endstuff}{\par\vspace{\baselineskip}\noindent\small
\begin{tabular}{@{}l}\@address\\\textit{E-mail address:} \@email\end{tabular}}
\newtheorem{theorem}{Theorem}[section]
\newtheorem{example}[theorem]{Example}
\newtheorem{remark}[theorem]{Remark}
\newtheorem{proposition}[theorem]{Proposition}
\newtheorem{corollary}[theorem]{Corollary}
\newtheorem{assumption}[theorem]{Assumption}
\newcommand{\citep}[1]{\cite{#1}}
\newcommand{\dvol}{\text{\normalfont dvol}}
\newcommand{\even}{\text{\normalfont{even}}}
\newcommand{\professor}{\text{Prof.\ }\hspace{-0.03125mm}}
\begin{document}
\title{\textbf{A vanishing property about the 1-filtered cohomology groups of (4n+2)-dimensional closed symplectic manifolds}}
\author{Hao Zhuang}
\address{Beijing International Center for Mathematical Research, Peking University}
\email{hzhuang@pku.edu.cn}
\date{\today}
\maketitle
\begin{abstract}
This note is a follow-up to our previous work \cite{haozhuangsymplecticsemi1}. For any $(4n+2)$-dimensional closed symplectic manifold, we find that the dimension of the even-degree part of its $1$-filtered cohomology is even, similar to the vanishing property of the classical Euler characteristic of an odd-dimensional closed manifold. We prove our result by constructing and then deforming a skew-adjoint operator. This process follows the methods in \cite{haozhuangsymplecticsemi1} but needs adjustments on signs and the power of the symplectic form. 
\end{abstract} 
\tableofcontents
\section{Introduction}
In the previous work \cite{haozhuangsymplecticsemi1}, we introduced the symplectic semi-characteristic for closed symplectic manifolds. When the dimension of the closed symplectic manifold is $4n$, we obtained a counting formula using a nondegenerate vector field. Due to this counting formula, we may view the symplectic semi-characteristic as an analogue of the classical Euler characteristic. 

However, there is a missing part. For an even dimensional closed manifold, the Poincar\'e-Hopf index formula computes its Euler characteristic. For an odd dimensional closed manifold, its Euler characteristic is $0$. In the symplectic situation, we only have a counting formula for the symplectic semi-characteristic of any $4n$-dimensional closed symplectic manifold, but we do not have a vanishing property in the $(4n+2)$-dimensional case. Actually, for any genus $g$ closed surface (See Remark \ref{counter example 2}), its symplectic semi-characteristic is always $1$, meaning that we cannot expect a vanishing property in the $(4n+2)$-dimensional case. 

In \cite{haozhuangsymplecticsemi1}, we only used the primitive case $(p = 0)$ of Tanaka and Tseng's model \cite[Theorem 1.1]{tanaka_tseng_2018} of Tsai, Tseng, and Yau's $p$-filtered cohomology \cite[(1.2), Theorem 3.1]{tty3rd}. In fact, according to \cite[Proposition 4.8]{tty3rd} or \cite[Theorem 1.3]{tangtsengclausenmappingcone}, for any closed symplectic manifold, the alternating sum of the dimensions of its $p$-filtered cohomology groups is $0$ for any $p\geqslant 0$. If we use the $p\geqslant 1$ filtered cohomology (In Tanaka and Tseng's model, it is revealed by $\wedge\omega^{p+1}$.), we can define another ``semi-characteristic'' for any $(4n+2)$-dimensional closed symplectic manifold, and this new ``semi-characteristic'' may be $0$.

In this note, we show that for any $(4n+2)$-dimensional closed symplectic manifold, the dimension of the even-degree part of its $1$-filtered cohomology is always even. By taking the parity of this dimension, we obtain a vanishing property. 

\begin{assumption}\normalfont
   If there is no particular clarification, $(M,\omega)$ always means a $(4n+2)$-dimensional closed symplectic manifold $M$ admitting a symplectic form $\omega$. 
\end{assumption}

Let $\psi = \omega\wedge\omega$. We present the $1$-filtered case of Tanaka and Tseng's cochain complex (See \cite[Theorem 1.1]{tanaka_tseng_2018} and \cite[(1.1)]{tangtsengclausenmappingcone}). The space of $k$-cochains is $$C^k(M,\psi) \coloneqq \Omega^k(M)\oplus\Omega^{k-3}(M)\ \  (k = 0,1,\cdots,4n+5),$$
where $\Omega^k(M)$ is the space of all smooth forms of degree $k$ on $M$. 
Let $d$ be the exterior derivative on differential forms. Then, we have the coboundary map 
\begin{align*}
\begin{split}
    \partial_{\psi}: C^k(M,\psi)&\to C^{k+1}(M,\psi)
\end{split}
\end{align*}
given by 
\begin{align}\label{mapping cone complex filtered 1}
    \partial_\psi(\alpha,\beta) = (d\alpha_1+\psi\wedge\alpha_2, -d\alpha_2)
\end{align}
for all $(\alpha,\beta)\in\Omega^k(M)\oplus\Omega^{k-3}(M)$. 
The $i$-th cohomology group of $(C^\bullet(M,\psi),\partial_\psi)$ is isomorphic to the $i$-th $1$-filtered cohomology group of $(M,\omega)$. Let $b^\psi_{i}$ be the dimension of the $i$-th cohomology group of $(C^\bullet(M,\psi),\partial_\psi)$. Then, we define the following ``$1$-filtered semi-characteristic''
    \begin{align}\label{symplectic semi definition we need it}
        \ell(M,\psi) = b_{0}^\psi+b_2^\psi+b_4^\psi+\cdots+b_{4n+4}^\psi\mod 2.
    \end{align}
Our main result is as follows. 
\begin{theorem}\label{another main theorem analogous to odd classical diff geo}
    For any $(4n+2)$-dimensional closed symplectic manifold $(M,\omega)$, its ``$1$-filtered semi-characteristic'' $\ell(M,\psi)$ is always $0$. 
\end{theorem}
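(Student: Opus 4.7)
The plan, following the architecture of \cite{haozhuangsymplecticsemi1}, is to realize $\ell(M,\psi)$ as the mod-$2$ dimension of the kernel of a skew-adjoint (equivalently, $\clifford$-linear self-adjoint) elliptic Fredholm operator on $M$, and then to deform that operator within its homotopy class to produce a manifestly even-dimensional kernel.

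First, I would equip $M$ with a Riemannian metric induced by a compatible almost-complex structure for $\omega$. This gives $L^2$ inner products on $\Omega^\bullet(M)$ and hence on $C^\bullet(M,\psi)$. Standard Hodge theory for the elliptic complex $(C^\bullet(M,\psi),\partial_\psi)$ then identifies each $H^k$ with the harmonic cochains, so that
\begin{align*}
\sum_{k\ \text{\normalfont even}} b_k^\psi \;=\; \dim \ker\bigl[(\partial_\psi+\partial_\psi^*)|_{C^{\even}(M,\psi)}\bigr].
\end{align*}
Next, I would build an operator $\tsig^{\psi}$ on a suitable (possibly doubled) version of $C^{\even}(M,\psi)$ by postcomposing $\partial_\psi + \partial_\psi^*$ with a sign-twisted isomorphism $C^{\odd}\to C^{\even}$ constructed from the Hodge star, the symplectic form $\omega$, and its power $\psi = \omega^2$. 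The goal is that $\tsig^{\psi}$ be skew-adjoint with $\ker \tsig^\psi$ equal to the above harmonic space. The construction mimics the one in \cite{haozhuangsymplecticsemi1}, but where that paper used $\omega$ on a $4n$-dimensional manifold, here the replacements $\omega\mapsto\omega^2$ and $4n\mapsto 4n+2$ both alter the parity signs governing the relevant Hodge identities $*^2=\pm 1$ and the commutation of $*$ with $\wedge\omega^k$; verifying that all these flips compose to yield skew-adjointness rather than self-adjointness is the main algebraic input.

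With $\tsig^\psi$ in hand, the final step is homotopy invariance. The dimension of the kernel of a skew-adjoint (equivalently $\clifford$-linear) Fredholm operator is invariant mod~$2$ under continuous deformations within its class. I would then deform $\tsig^\psi$ along a one-parameter family, likely built by adding a zeroth-order Clifford-type term coming from a generic vector field or Morse datum on $M$, in direct analogy with the deformation used in \cite{haozhuangsymplecticsemi1}. As the deformation parameter tends to infinity, the kernel is expected to localize near the zeroes of the chosen section and to pair up into $\pm$ eigenspaces, producing an even-dimensional limit kernel; invariance then yields $\ell(M,\psi)\equiv 0\pmod 2$. The main obstacle, as flagged above, is the sign-and-degree bookkeeping in Step~2: a single misplaced sign would produce a self-adjoint operator, for which the mod-$2$ kernel count is not a homotopy invariant and the whole deformation strategy collapses. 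The shift from $\omega$ to $\psi=\omega^2$ also mixes parities in each $C^k = \Omega^k\oplus\Omega^{k-3}$, so one must further check that the two summands pair correctly under the chosen involution so that the skew-adjoint structure descends to the even-degree subspace.
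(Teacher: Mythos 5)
Your overall architecture matches the paper's proof: Hodge theory identifies $\sum_{k\ \text{even}} b_k^\psi$ with $\dim\ker\bigl((\partial_\psi+\partial_\psi^*)|_{C^{\even}(M,\psi)}\bigr)$; one then twists $\partial_\psi+\partial_\psi^*$ by the Clifford action $\hat c(\dvol)$ of the volume form together with a sign matrix mixing the two summands of $C^k=\Omega^k\oplus\Omega^{k-3}$ to obtain a skew-adjoint elliptic operator; and the conclusion follows from the homotopy invariance of the Atiyah--Singer mod~2 index under a Witten deformation by $T\hat c(V)$ for a nondegenerate vector field $V$. You also correctly flag the genuinely delicate point, namely that the signs for $*^2$ and for commuting $\wedge\omega^2$ past $\hat c(\dvol)$ change in dimension $4n+2$, which is exactly why the paper cannot reuse the operator from the $4n$-dimensional case verbatim and must insert the extra sign $\bigl[\begin{smallmatrix}0&-1\\1&0\end{smallmatrix}\bigr]$.

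The one place where your argument as stated has a gap is the very last step. You claim that as $T\to\infty$ the localized kernel ``pairs up into $\pm$ eigenspaces, producing an even-dimensional limit kernel.'' That is not the mechanism, and no such pairing is available: the localization computation (as in the $4n$-dimensional case) shows that the local model at each zero of $V$ contributes exactly one dimension, so that $\dim\ker\bigl(\mathbb{D}_T|_{C^{\even}(M,\psi)}\bigr)$ equals the total number of zeros of $V$ --- an integer with no internal two-fold symmetry. The evenness is instead a global topological fact: the number of zeros of a nondegenerate vector field is congruent mod~2 to the sum of their indices, i.e.\ to $\chi(M)$ by Poincar\'e--Hopf, and $\chi(M)$ is even when $\dim M\equiv 2\pmod 4$ because the intersection pairing in the odd middle degree $2n+1$ is skew-symmetric, forcing $b_{2n+1}$ to be even. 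Without this input the deformation argument does not close; with it, your proof becomes exactly the paper's.
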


We prove Theorem \ref{another main theorem analogous to odd classical diff geo} in Section \ref{Doubling the symplectic form section}, following the framework of \cite[Sections 2-4]{haozhuangsymplecticsemi1}. However, compared with the $4n$-dimensional situation \cite{haozhuangsymplecticsemi1}, the current $(4n+2)$-dimensional situation causes differences about plus and minus signs when taking the adjoint of the Clifford action of the volume form, and when swapping the symplectic form with the Clifford action of the volume form. Thus, one technical issue is that the skew-adjoint operator \cite[(2.3)]{haozhuangsymplecticsemi1} cannot be directly imported. We need to use $\psi = \omega\wedge\omega$ and carefully assign plus and minus signs. The usage of $\psi$ also explains why we find the vanishing property in the $1$-filtered situation instead of in the primitive situation. Once the skew-adjoint operator is given, the Witten deformation and the asymptotic analysis follows immediately. In Section \ref{Examples section}, we give some examples and remarks. 

\begin{remark}\normalfont
    At present, we have $\ell(M,\psi) = 0$ for any $(4n+2)$-dimensional symplectic manifold. However, just like \cite{haozhuangsymplecticsemi1} asking for the value of the symplectic semi-characteristic $k(M,\omega)$ for any $(4n+2)$-dimensional symplectic manifold, we can also ask about the value of $\ell(M,\psi)$ when $\dim M = 4n$. We may need more understanding of the primitive cohomology \cite[(3.14), (3.22)]{tty1st} and \cite[(1.5), (1.6)]{tty2nd} and the $p$-filtered cohomology \cite[(1.2), Theorem 3.1]{tty3rd} to give an answer. 
\end{remark}

\vspace{+3mm}
\noindent\textbf{Acknowledgments}. I want to thank my PhD supervisor \professor Xiang Tang and my Postdoc mentor \professor Xiaobo Liu for supporting my research on symplectic invariants. Meanwhile, I want to thank \professor Li-Sheng Tseng for the discussions on the primitive and filtered cohomology groups. Finally, I want to thank Beijing International Center for Mathematical Research for providing a vibrant environment for me to complete this paper. 

\section{Skew-adjoint operator}\label{Doubling the symplectic form section}
We now prove Theorem \ref{another main theorem analogous to odd classical diff geo}. We follow the framework in \cite[Sections 2-4]{haozhuangsymplecticsemi1}, but adapt the construction of the skew-adjoint operator into the $(4n+2)$-dimensional and $1$-filtered case. We provide necessary details for the operator. Afterwards, the asymptotic analysis following the deformation is directly derived from \cite[Sections 3-4]{haozhuangsymplecticsemi1} and thus almost omitted. 

Like in the beginning of \cite[Section 2]{haozhuangsymplecticsemi1}, we assign an almost complex structure $J$ compatible with $\omega$, and then immediately obtain the metric and the $L^2$-norms. Let $d^*$ be the formal adjoint of $d$, and $$\psi^*\lrcorner: \Omega^k(M)\to\Omega^{k-4}(M)$$ be the adjoint of 
\begin{align*}
\psi\wedge: \Omega^k(M)&\to\Omega^{k+4}(M)\\
    \alpha&\mapsto \omega\wedge\omega\wedge\alpha.
\end{align*}
Like in \cite{haozhuangsymplecticsemi1}, we often omit the ``$\lrcorner$'' after $\omega^*$ and the ``$\wedge$'' after $\omega$. Also, we write the coboundary map (\ref{mapping cone complex filtered 1}) into $$\partial_\psi: \begin{bmatrix}
       \alpha_1\\
       \alpha_2
   \end{bmatrix} \mapsto \begin{bmatrix}
    d & \psi\\
    0 & -d
\end{bmatrix}\begin{bmatrix}
    \alpha_1\\
    \alpha_2
\end{bmatrix} = \begin{bmatrix}
    d\alpha_1+\psi\wedge\alpha_2\\
    -d\alpha_2
\end{bmatrix}.$$ The formal adjoint $\partial_\psi^*$ of the boundary map $\partial_\psi$ is 
$$\partial_\psi^* = \begin{bmatrix}
    d^* & 0\\
    \psi^* & -d^*
\end{bmatrix}.$$
Let $C^{\even}(M,\psi)$ be the space $$\sum_{k\text{\ is\ }\even}\left(\Omega^k(M)\oplus\Omega^{k-3}(M)\right).$$
Similar to \cite[Proposition 2.1]{haozhuangsymplecticsemi1}, we have the following Hodge theorem for $$\partial_\psi+\partial_\psi^* = \begin{bmatrix}
        d+d^* & \psi\\
        \psi^* & -d-d^*
    \end{bmatrix}: $$
\begin{proposition}\label{proposition hodge theorem}
     
$\dim\ker\left((\partial_\psi+\partial_\psi^*)\vert_{C^\text{\normalfont even}(M,\psi)}\right) = b^\psi_0+b^\psi_2+\cdots+b^\psi_{4n+4}.$
\end{proposition}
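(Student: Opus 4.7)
The plan is to apply the standard Hodge theory package to the cochain complex $(C^\bullet(M,\psi), \partial_\psi)$, mirroring \cite[Proposition 2.1]{haozhuangsymplecticsemi1}. I would first verify that $\partial_\psi$ is a coboundary, then show that the associated Laplacian $\Delta_\psi := (\partial_\psi + \partial_\psi^*)^2$ is a second-order elliptic operator on the closed manifold $M$, so that the finite-dimensional Hodge decomposition applies and harmonic cochains represent the $1$-filtered cohomology. Restricting to the degree-preserving even part then gives the desired dimension identity.

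In detail, the first step is to check $\partial_\psi^2 = 0$. Using $d^2 = 0$, the closedness $d\psi = 2\,\omega\wedge d\omega = 0$, and the graded Leibniz rule $d(\psi\wedge\alpha_2) = \psi\wedge d\alpha_2$ (valid since $|\psi|=4$ is even), one obtains $\partial_\psi^2(\alpha_1,\alpha_2) = (d^2\alpha_1 + \psi\wedge d\alpha_2 - \psi\wedge d\alpha_2,\, d^2\alpha_2) = (0,0)$. Taking formal adjoints also gives $(\partial_\psi^*)^2 = 0$, hence $\Delta_\psi = \partial_\psi\partial_\psi^* + \partial_\psi^*\partial_\psi$. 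Expanding this as a $2\times 2$ matrix of operators, the diagonal blocks are $dd^*+d^*d + \psi\psi^*$ and $dd^*+d^*d + \psi^*\psi$, while the off-diagonal blocks $d^*\psi - \psi d^*$ and $\psi^* d - d\psi^*$ are first-order operators. Consequently the order-two principal symbol of $\Delta_\psi$ equals that of the Hodge Laplacian on $\Omega^k(M)\oplus \Omega^{k-3}(M)$, namely $|\xi|^2 \cdot \mathrm{Id}$, so $\Delta_\psi$ is elliptic. On the closed manifold $M$, this yields the $L^2$-orthogonal decomposition $C^k(M,\psi) = \ker\Delta_\psi|_{C^k} \oplus \image(\partial_\psi|_{C^{k-1}}) \oplus \image(\partial_\psi^*|_{C^{k+1}})$, with $\ker\Delta_\psi|_{C^k}$ finite-dimensional and canonically isomorphic to the $k$-th cohomology of $(C^\bullet(M,\psi), \partial_\psi)$, hence of dimension $b^\psi_k$.

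To finish, I would show $\ker\Delta_\psi = \ker(\partial_\psi+\partial_\psi^*)$. For $x = \sum_k x_k$ in $C^\bullet$, the equation $(\partial_\psi+\partial_\psi^*)x = 0$ decomposes by degree into $\partial_\psi x_k + \partial_\psi^* x_{k+2} = 0$ in $C^{k+1}$; pairing with $\partial_\psi x_k$ and using $\partial_\psi^2 = 0$ yields $\|\partial_\psi x_k\|^2 = -\langle \partial_\psi x_k, \partial_\psi^* x_{k+2}\rangle = -\langle \partial_\psi^2 x_k, x_{k+2}\rangle = 0$, so $\partial_\psi x_k = 0$ and hence $\partial_\psi^* x_{k+2} = 0$ as well. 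Since $\Delta_\psi$ preserves degree, $\ker\Delta_\psi|_{C^\even(M,\psi)}$ is the direct sum $\bigoplus_{k\text{ even}} \ker\Delta_\psi|_{C^k}$, whose dimension is $b^\psi_0 + b^\psi_2 + \cdots + b^\psi_{4n+4}$. The main potential obstacle is the ellipticity verification: one must confirm that the off-diagonal $\psi$-twisted terms are genuinely subprincipal relative to the diagonal Hodge Laplacian. This is immediate once one notes that $\psi\wedge$ is zeroth-order, so its compositions with $d$ or $d^*$ are only first-order and do not enter the order-two symbol. With that in hand, everything else is a direct adaptation of the arguments in \cite{haozhuangsymplecticsemi1}.
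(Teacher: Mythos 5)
Your proof is correct and follows essentially the route the paper intends: the paper gives no argument beyond citing the analogous Hodge theorem in the earlier work, and that argument is exactly the standard package you spell out (checking $\partial_\psi^2=0$ via $d\psi=0$, noting the $\psi$-terms are zeroth order so the Laplacian has the same principal symbol as the Hodge Laplacian, and identifying $\ker(\partial_\psi+\partial_\psi^*)$ with the harmonic representatives degree by degree). Your expansion of the $2\times 2$ Laplacian and the degree-by-degree integration-by-parts step are both accurate, so there is nothing to correct.
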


Let $\dvol$ be the volume form of $M$ associated with the metric. Recall the Clifford actions $\hat{c}(\dvol)$ of the volume form presented in \cite[Section 2]{haozhuangsymplecticsemi1}. We adapt it into the current $(4n+2)$-dimensional case and let 
\begin{align*}
    \widetilde{\mathbb{D}} \coloneqq \begin{bmatrix}
        0 & -1\\
        1 & 0
    \end{bmatrix}\begin{bmatrix}
        \hat{c}(\dvol) & \\
        & \hat{c}(\dvol)
    \end{bmatrix}\cdot(\partial_\psi+\partial_\psi^*).
\end{align*}

\begin{proposition}\label{proposition skew-adjoint signature operator}
    The operator $$\widetilde{\mathbb{D}}: C^{\even}(M,\psi)\to C^{\even}(M,\psi)$$ is skew-adjoint. 
\end{proposition}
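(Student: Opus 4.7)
The plan is to factor
$$\widetilde{\mathbb{D}} = J\,D, \quad\text{where}\quad D = \partial_\psi+\partial_\psi^*,\ \ J = \begin{bmatrix}0 & -1\\ 1 & 0\end{bmatrix}\begin{bmatrix}\hat{c}(\dvol) & 0\\ 0 & \hat{c}(\dvol)\end{bmatrix} = \begin{bmatrix}0 & -\hat{c}(\dvol)\\ \hat{c}(\dvol) & 0\end{bmatrix},$$
and then reduce skew-adjointness of $\widetilde{\mathbb{D}}$ to the single anticommutation relation $\{J,D\}=0$. Since $D$ is self-adjoint by construction, this amounts to checking that $J^* = J$ and $DJ + JD = 0$.

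First I would pin down the sign of $\hat{c}(\dvol)^*$ in dimension $m = 4n+2$. Each factor $\hat{c}(e_i) = e_i^*\wedge + i_{e_i}$ is self-adjoint on $\Omega^\bullet(M)$ (since $e_i^*\wedge$ and $i_{e_i}$ are mutually adjoint), and distinct $\hat{c}(e_i),\hat{c}(e_j)$ anticommute, so
$$\hat{c}(\dvol)^* \;=\; \hat{c}(e_m)^*\cdots\hat{c}(e_1)^* \;=\; \hat{c}(e_m)\cdots\hat{c}(e_1) \;=\; (-1)^{m(m-1)/2}\,\hat{c}(\dvol).$$
For $m = 4n+2$ the exponent $m(m-1)/2 = (2n+1)(4n+1)$ is odd, so $\hat{c}(\dvol)^* = -\hat{c}(\dvol)$; this is the sign opposite to the $4n$-dimensional case of \cite{haozhuangsymplecticsemi1}. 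A direct block computation then yields $J^* = J$, hence $\widetilde{\mathbb{D}}^* = (JD)^* = D^*J^* = DJ$, and the problem reduces to $DJ+JD = 0$.

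Expanding $DJ+JD$ block by block, the off-diagonal blocks collapse to $[\hat{c}(\dvol),\, d+d^*] = 0$, while the diagonal blocks collapse to $\hat{c}(\dvol)\psi = \psi^*\hat{c}(\dvol)$ (together with its companion $\psi\hat{c}(\dvol) = \hat{c}(\dvol)\psi^*$, which is equivalent to the first since $\hat{c}(\dvol)^2 = (-1)^{m(m-1)/2} = -1$ is a scalar). For the commutator with $d+d^*$ I would use two ingredients: $\hat{c}(\dvol)$ is parallel under Levi-Civita, because $\dvol$ is, so it commutes with every $\nabla_{e_k}$; and in even dimension $m$ it commutes with the skew-adjoint Clifford action $c(e_k)$, because moving $c(e_k)$ past the $m$ mutually anticommuting factors of $\hat{c}(\dvol)$ contributes a total sign $(-1)^m = 1$. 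The local expression $d + d^* = \sum_k c(e_k)\nabla_{e_k}$ in an orthonormal frame then delivers $[\hat{c}(\dvol), d+d^*] = 0$ on all of $\Omega^\bullet(M)$.

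The remaining identity $\hat{c}(\dvol)\psi = \psi^*\hat{c}(\dvol)$ is where the doubling $\psi = \omega\wedge\omega$ becomes essential. Since $\hat{c}(\dvol)$ coincides with the Hodge star up to a degree-dependent sign, conjugating the Lefschetz operator $\omega\wedge$ by $\hat{c}(\dvol)$ yields the dual contraction $\omega^*\lrcorner$ up to a sign $\epsilon(k) = \pm 1$ that in general depends on the degree $k$ of the input form. Applying the conjugation twice replaces the residual sign by $\epsilon(k)\epsilon(k+2)$, which turns out to equal $+1$ uniformly in $k$, so $\hat{c}(\dvol)\psi = \psi^*\hat{c}(\dvol)$ holds without any leftover degree dependence. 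I expect the main obstacle to be precisely this degree-parity bookkeeping — tracking signs when swapping $\hat{c}(\dvol)$ past a single $\omega\wedge$ on an inhomogeneous form and then verifying that the signs collapse after squaring. This is exactly the technical adjustment the introduction flags as distinguishing the $(4n+2)$-dimensional, $1$-filtered setting from the $(4n)$-dimensional primitive setting of \cite{haozhuangsymplecticsemi1}, and it is the reason the skew-adjoint operator here is built from $\psi$ rather than from $\omega$ alone.
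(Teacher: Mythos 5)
Your proposal is correct and follows essentially the route the paper itself takes (by deferral to \cite[Proposition 2.4]{haozhuangsymplecticsemi1}): factor $\widetilde{\mathbb{D}}$ as the self-adjoint block operator $J$ times the self-adjoint $\partial_\psi+\partial_\psi^*$ and reduce to anticommutation, with the two sign points you isolate --- $\hat{c}(\dvol)^*=-\hat{c}(\dvol)$ since $(2n+1)(4n+1)$ is odd, and the residual sign in $\hat{c}(\dvol)\,\omega\wedge = \epsilon\,\omega^*\lrcorner\,\hat{c}(\dvol)$ cancelling after squaring to give $\hat{c}(\dvol)\psi=\psi^*\hat{c}(\dvol)$ --- being exactly the two differences from the $4n$-dimensional case that the paper flags. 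Your key sign claims check out (indeed $\epsilon=-1$ uniformly in even dimensions via the Weil identity $\Lambda=*^{-1}L*$ for the compatible metric, so $\epsilon(k)\epsilon(k+2)=+1$), and your write-up supplies more detail than the paper does.
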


The proof of Proposition \ref{proposition skew-adjoint signature operator} is similar to the proof of \cite[Proposition 2.4]{haozhuangsymplecticsemi1}. The main differences are the minus signs when swapping $\omega$ with $\hat{c}(\dvol)$ and when taking the adjoint of $\hat{c}(\dvol)$. These differences are caused by $\dim M = 4n+2$. 

\begin{remark}\normalfont
    The construction of $\widetilde{\mathbb{D}}$ is not simply replacing $\omega$ in \cite[(2.3)]{haozhuangsymplecticsemi1} by $\psi$. We also need to adjust the signs in the first matrix. Actually, the minus sign in the first matrix in $\widetilde{\mathbb{D}}$ is also due to the change of the dimension. 
\end{remark}

Recall the concept of Atiyah-Singer mod 2 index (See \cite[Theorem A]{atiyahsingerskewadjoint} and \cite[(7.5)]{wittendeformationweipingzhang}) of any real elliptic skew-adjoint operator. 
It is equal to the parity of the dimension of kernel and is homotopy invariant. By Proposition \ref{proposition hodge theorem}, we have: 
\begin{corollary}\label{corollary signature = semi char}
    $\ell(M,\psi) = \dim\ker\left(\widetilde{\mathbb{D}}\vert_{C^\text{\normalfont even}(M,\psi)}\right)\mod 2$. 
\end{corollary}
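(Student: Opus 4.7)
The plan is to derive the corollary by showing that the kernels of $\widetilde{\mathbb{D}}$ and $\partial_\psi+\partial_\psi^*$ on $C^{\even}(M,\psi)$ coincide, and then invoke Proposition \ref{proposition hodge theorem} together with the definition (\ref{symplectic semi definition we need it}) of $\ell(M,\psi)$.

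The first step is to observe that $\widetilde{\mathbb{D}}$ is, by its very construction, the composition of $\partial_\psi+\partial_\psi^*$ with the operator $\begin{bmatrix} 0 & -1\\ 1 & 0 \end{bmatrix}\begin{bmatrix} \hat{c}(\dvol) & \\ & \hat{c}(\dvol)\end{bmatrix}$. The plan is to check that this prefactor is a pointwise isomorphism of the fiber bundle whose sections make up $C^{\even}(M,\psi)$. The $2\times 2$ matrix $\begin{bmatrix} 0 & -1\\ 1 & 0\end{bmatrix}$ is manifestly invertible with inverse $\begin{bmatrix} 0 & 1\\ -1 & 0\end{bmatrix}$, so the only substantive point is the pointwise invertibility of $\hat{c}(\dvol)$. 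This follows from the fact that the Clifford action of the unit volume form (recalled from \cite[Section 2]{haozhuangsymplecticsemi1}) squares to $\pm 1$ on $\Lambda^* T^*M$, hence is a fiberwise linear isomorphism.

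From the pointwise invertibility of the prefactor, I conclude that
$$\ker\left(\widetilde{\mathbb{D}}\big|_{C^{\even}(M,\psi)}\right) = \ker\left((\partial_\psi+\partial_\psi^*)\big|_{C^{\even}(M,\psi)}\right).$$
Taking dimensions and applying Proposition \ref{proposition hodge theorem} yields $\dim\ker(\widetilde{\mathbb{D}}|_{C^{\even}(M,\psi)}) = b_0^\psi+b_2^\psi+\cdots+b_{4n+4}^\psi$. Reducing modulo $2$ and using (\ref{symplectic semi definition we need it}) gives the claimed identity.

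I do not expect any serious obstacle here: the bulk of the analytic work has been absorbed into Proposition \ref{proposition hodge theorem}, and the remaining content is the triviality that multiplying an operator by a pointwise invertible bundle endomorphism does not change its kernel. The one point worth double-checking is the sign/square convention for $\hat{c}(\dvol)$ in dimension $4n+2$, which is the same subtlety that forced the sign adjustments described in the preceding remark; but since only invertibility (not a specific square) is needed for this corollary, this will not cause trouble.
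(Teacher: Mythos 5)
Your proposal is correct and matches the paper's (largely implicit) argument: the paper deduces the corollary directly from Proposition \ref{proposition hodge theorem}, the unstated point being exactly the one you supply, namely that the prefactor $\begin{bmatrix} 0 & -1\\ 1 & 0 \end{bmatrix}\mathrm{diag}(\hat{c}(\dvol),\hat{c}(\dvol))$ is a fiberwise isomorphism and hence does not change the kernel. Note that the Atiyah--Singer mod $2$ index and its homotopy invariance, mentioned just before the corollary, are not needed here (they are used only in the subsequent deformation steps), and you correctly avoid invoking them.
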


Let $\mathbb{D}$ be the skew-adjoint operator $$\begin{bmatrix}
        \dfrac{1}{2}(\psi^*-\psi) & -d-d^*\\
        d+d^* & \dfrac{1}{2}(\psi-\psi^*)
    \end{bmatrix}: C^\even(M,\psi)\to C^\even(M,\psi).$$
Then, we have: 
\begin{proposition}
   $\ell(M,\psi) = \dim\ker\left(\mathbb{D}\vert_{C^\text{\normalfont even}(M,\psi)}\right)\mod 2$. 
\end{proposition}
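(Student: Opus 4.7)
The plan is to combine Corollary~\ref{corollary signature = semi char} with the homotopy invariance of the Atiyah--Singer mod~$2$ index, exactly in the spirit of \cite{haozhuangsymplecticsemi1}. It suffices to show
\[
\dim\ker\bigl(\widetilde{\mathbb{D}}\vert_{C^\even(M,\psi)}\bigr)\equiv\dim\ker\bigl(\mathbb{D}\vert_{C^\even(M,\psi)}\bigr)\pmod{2},
\]
and I would establish this by constructing a homotopy of real elliptic skew-adjoint operators on $C^\even(M,\psi)$ joining the two endpoints.

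Concretely, I would take the straight-line family $\mathbb{D}_s\coloneqq (1-s)\widetilde{\mathbb{D}}+s\mathbb{D}$ for $s\in[0,1]$. Skew-adjointness at every $s$ is free by linearity, since Proposition~\ref{proposition skew-adjoint signature operator} gives one endpoint and $\mathbb{D}$ is skew-adjoint by definition. Because $\psi\wedge$, $\psi^*\lrcorner$, and $\hat{c}(\dvol)$ are of order zero, only the $d+d^*$ pieces contribute to the principal symbol. Writing $c(\xi)\coloneqq \sigma(d+d^*)(\xi)$ and $T\coloneqq \hat{c}(\dvol)$, a direct computation yields
\[
\sigma(\mathbb{D}_s)(\xi)=\begin{bmatrix}0 & \bigl((1-s)T-sI\bigr)c(\xi)\\ \bigl((1-s)T+sI\bigr)c(\xi)&0\end{bmatrix}.
\]

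Next I would feed in two Clifford identities specific to $\dim M=4n+2$: namely $T^2=-I$ and the commutation $Tc(\xi)=c(\xi)T$ (the latter because $c$ and $\hat{c}$ anticommute on $1$-forms, and moving $c(\xi)$ past an even number $4n+2$ of $\hat{c}(e_i)$ factors produces a trivial sign). Together with $c(\xi)^2=-|\xi|^2I$, a short calculation then gives
\[
\sigma(\mathbb{D}_s)^2(\xi)=\bigl(2s^2-2s+1\bigr)|\xi|^2\,I
\]
on each diagonal block. The quadratic $2s^2-2s+1$ has negative discriminant and positive leading coefficient, so it stays strictly positive on $[0,1]$; hence $\mathbb{D}_s$ is elliptic throughout the homotopy. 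The homotopy invariance of the Atiyah--Singer mod~$2$ index (\cite[Theorem~A]{atiyahsingerskewadjoint}) then delivers the required congruence, and Corollary~\ref{corollary signature = semi char} finishes the proof.

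The principal obstacle is the uniform ellipticity check, and the positivity of $2s^2-2s+1$ ultimately rests on the sign $T^2=-I$ forced by $\dim M=4n+2$. In the $4n$-dimensional setting of \cite{haozhuangsymplecticsemi1} one has $T^2=+I$ instead, and repeating the computation would yield $(1-2s)|\xi|^2\,I$, which vanishes at $s=1/2$; the same straight-line interpolation would therefore be inadmissible there and a different deformation would be needed. This mirrors the paper's remark that the sign adjustments are exactly where the $(4n+2)$-dimensional case departs from the $4n$-dimensional one.
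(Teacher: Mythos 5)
Your proof is correct, but it takes a genuinely different route from the paper's. The paper proves the proposition by exhibiting an explicit operator identity
\[
\widetilde{\mathbb{D}} \;=\; \begin{bmatrix} -1 & 0\\ 0 & 1\end{bmatrix}\begin{bmatrix}\hat{c}(\dvol) & \\ & \hat{c}(\dvol)\end{bmatrix}\cdot\mathbb{D} \;+\; \frac{1}{2}\begin{bmatrix}-\hat{c}(\dvol)(\psi^*+\psi) & \\ & \hat{c}(\dvol)(\psi+\psi^*)\end{bmatrix},
\]
i.e.\ $\widetilde{\mathbb{D}}$ is an invertible order-zero bundle twist of $\mathbb{D}$ plus an order-zero perturbation, and then invokes Corollary~\ref{corollary signature = semi char} together with homotopy invariance of the mod~$2$ index, leaving the actual admissible path implicit. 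You instead take the straight-line family $\mathbb{D}_s=(1-s)\widetilde{\mathbb{D}}+s\mathbb{D}$, which is skew-adjoint for free, and verify admissibility head-on by computing the principal symbol. I checked your computation: with $T=\hat{c}(\dvol)$ one has $T^2=-I$ and $Tc(\xi)=c(\xi)T$ precisely because $\dim M=4n+2$, the zeroth-order terms $\psi\wedge$, $\psi^*\lrcorner$ drop out of the symbol, and the diagonal blocks of $\sigma(\mathbb{D}_s)(\xi)^2$ equal $((1-s)^2+s^2)|\xi|^2 I=(2s^2-2s+1)|\xi|^2 I$, which is strictly positive on $[0,1]$; so $\mathbb{D}_s$ is a continuous family of real elliptic skew-adjoint operators and the mod~$2$ index is constant along it. What your approach buys is a self-contained and checkable admissibility argument (the uniform ellipticity is exactly the point that could fail, as your $4n$-dimensional aside shows); what the paper's identity buys is a transparent algebraic relation between the two operators that isolates where the order-zero discrepancy lives, at the cost of leaving the homotopy and its ellipticity unverified on the page. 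The only blemish is a harmless sign in your side remark (in the $4n$ case the block would be $(2s-1)|\xi|^2 I$ rather than $(1-2s)|\xi|^2 I$), which does not affect the conclusion that it degenerates at $s=1/2$.
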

\begin{proof}
    We find that the operator $\widetilde{\mathbb{D}}$ is equal to
    \begin{align*}
        \begin{bmatrix}
        -1 & 0 \\
         0 & 1
    \end{bmatrix}\begin{bmatrix}
        \hat{c}(\dvol) & \\
         & \hat{c}(\dvol)
    \end{bmatrix}\hspace{-1mm}\cdot\hspace{-0.5mm}\mathbb{D} + \dfrac{1}{2}\begin{bmatrix}
        -\hat{c}(\dvol)(\psi^*+\psi) & \\
        & \hat{c}(\dvol)(\psi+\psi^*)
    \end{bmatrix}.
    \end{align*}
    The proposition is then guaranteed by Corollary \ref{corollary signature = semi char} and the homotopy invariance of the Atiyah-Singer mod 2 index.
\end{proof}

 Let $V$ be a nondegenerate smooth vector field on $M$. For the definition of nondegenerate vector fields, see \cite[Section 1.6]{bgv} and \cite[Definition 1.4]{haozhuangsymplecticsemi1}. Similar to \cite[Section 3]{haozhuangsymplecticsemi1}, for any $T>0$, we have the following Witten deformation 
\begin{align}\label{witten deformation finally appears}
    \mathbb{D}_{T}\coloneqq \begin{bmatrix}
    \dfrac{1}{2}(\psi^*-\psi) & -d-d^*-T\hat{c}(V)\\ 
    d+d^*+T\hat{c}(V) & \dfrac{1}{2}(\psi-\psi^*)
\end{bmatrix}
\end{align}
of the operator $\mathbb{D}$. By the homotopy invariance of the Atiyah-Singer mod 2 index, 
\begin{align*}
    \ell(M,\psi) = & \dim\ker\left(\mathbb{D}\vert_{C^\text{\normalfont even}(M,\psi)}\right)\mod 2 \\
    = & \dim\ker\left(\mathbb{D}_T\vert_{C^\text{\normalfont even}(M,\psi)}\right)\mod 2.
\end{align*}
Now, we conduct all the asymptotic analysis in \cite[Sections 3-4]{haozhuangsymplecticsemi1} on the operator $\mathbb{D}_T$ but with two adjustments: 
\begin{enumerate}[label = (\arabic*)]
    \item Replacing the dimension $4n$ by $4n+2$;
    \item Replacing the form $\omega$ by $\psi$. 
\end{enumerate}
For more about the asymptotic analysis that we use, see \cite[Chapters VIII-X]{bismutandlebeau}, \cite[Section 2.2]{zhangcountingmod2indexkervairesemi}, \cite[Chapters 4-7]{wittendeformationweipingzhang}, and \cite[Chapters 4-7]{weipingzhangnewedition}. After that, we find
\begin{align}\label{second to last conclusion}
    \dim\ker\left(\mathbb{D}_T\vert_{C^\text{even}(M,\psi)}\right) = \text{the number of zero points of\ }V.
\end{align}
 Since $\dim M = 4n+2$, a standard fact \cite[Theorem 2.6]{zuoqinwangnote27} is that the parity of the right hand side of (\ref{second to last conclusion}) is always even. Therefore, 
 \begin{align*}
     & \hspace{+1mm}\ell(M,\psi)\\
     = &\hspace{+0.6mm} \dim\ker\left(\mathbb{D}_T\vert_{C^\text{even}(M,\psi)}\right)\mod 2 \\
     = &\ \text{the number of zero points of\ }V\mod 2 \\
     = &\ 0.
 \end{align*}
 The proof of Theorem \ref{another main theorem analogous to odd classical diff geo} is complete.

\section{Examples and remarks}\label{Examples section}
In this section, we use both \cite[(3.2)]{tangtsengclausenmappingcone} and our Theorem \ref{another main theorem analogous to odd classical diff geo} to obtain $\ell(M,\psi) = 0$ for two symplectic manifolds. Also, we give two remarks about $\ell(M,\psi)$ and the symplectic semi-characteristic $k(M,\omega)$. 

We first recall the formula \cite[(3.2)]{tangtsengclausenmappingcone} of $b_i^\psi$. Let $H_{\text{dR}}^i(M)$ be the $i$-th de Rham cohomology group of $M$, $b_i$ be the dimension of $H_{\text{dR}}^i(M)$, and $r_i$ be the rank of the map
\begin{align}\label{the psi map}
\begin{split}
    \psi\wedge: H_{\text{dR}}^i(M) &\to H_{\text{dR}}^{i+4}(M)\\
    \alpha &\mapsto \psi\wedge\alpha = \omega\wedge\omega\wedge\alpha.
\end{split}
\end{align}
Then, we have 
\begin{align}\label{the formula for b i psi}
    b_i^\psi = b_i-r_{i-4}+b_{i-3}-r_{i-3}.
\end{align}
Let $\mathbb{S}^2$ be the $2$-dimensional unit sphere, $\omega_{\mathbb{S}^2}$ be the standard symplectic form on $\mathbb{S}^2$, and $h: \mathbb{S}^2\to\mathbb{R}$ be the height function \cite[Example 3.4]{banyaga2013lectures}.
\begin{example}\normalfont \label{weak replacement of the euler char}
\normalfont
    Let $M$ be $\mathbb{S}^2\times\mathbb{S}^2\times\mathbb{S}^2$. Let $\omega_j$ ($j = 1, 2, 3$) be the pullback of $\omega_{\mathbb{S}^2}$ onto $\mathbb{S}^2\times\mathbb{S}^2\times\mathbb{S}^2$ via the $j$-th projection. The symplectic form on $\mathbb{S}^2\times\mathbb{S}^2\times\mathbb{S}^2$ is then 
    $\omega_1+\omega_2+\omega_3.$ The function
    \begin{align*}
    f: \mathbb{S}^2\times\mathbb{S}^2\times\mathbb{S}^2&\to\mathbb{R}\\
    (q_1,q_2,q_3)&\mapsto h(q_1)+h(q_2)+h(q_3)
    \end{align*}
     is a Morse function with $8$ critical points. By Theorem \ref{another main theorem analogous to odd classical diff geo},  
    $k(\mathbb{S}^2\times\mathbb{S}^2\times\mathbb{S}^2,\psi) = 0$.

    Alternatively, we can use \cite[(3.2)]{tangtsengclausenmappingcone}. Here, $\psi$ is equal to 
    $$2\omega_1\wedge\omega_2+2\omega_1\wedge\omega_3+2\omega_2\wedge\omega_3.$$
    Using the K\"unneth formula \cite[Section 5]{bott1982differential}, we find
    $$b_0 = b_6 = 1, b_1 = b_3 = b_5 = 0, b_2 = b_4 = 3.$$
    Then, using the basis of the de Rham cohomology of $\mathbb{S}^2\times\mathbb{S}^2\times\mathbb{S}^2$, we find
    $$r_0 = r_2 = 1, r_1 = r_3 = r_4 = r_5 = r_6 = 0.$$
    For example, $H_{\text{dR}}^2(\mathbb{S}^2\times\mathbb{S}^2\times\mathbb{S}^2)$ has a basis
    $$\omega_1, \omega_2, \omega_3.$$
    The map (\ref{the psi map}) maps this basis to only one element
    $$2\omega_1\wedge\omega_2\wedge\omega_3$$
    in $H^6_{\text{dR}}(\mathbb{S}^2\times\mathbb{S}^2\times\mathbb{S}^2),$
    so the rank $r_2 = 1$. Other $r_i$'s are computed similarly. By (\ref{the formula for b i psi}), $$b_0^\psi+b_2^\psi+b_4^\psi+b_6^\psi+b_8^\psi = 1+3+2+0+0 = 6, $$
    showing that $\ell(\mathbb{S}^2\times\mathbb{S}^2\times\mathbb{S}^2,\psi) = 0$.
\end{example}

\begin{example}\normalfont
    Recall the Kodaira-Thurston four-fold given in \cite[Section 3.4]{tty1st} and \cite[(5.3)]{tanaka_tseng_2018}: By identifying points
   $$(x_1,x_2,x_3,x_4)\sim (x_1 + a, x_2+b, x_3+c, x_4+d - bx_3) \text{\ \ (when $a,b,c,d\in\mathbb{Z}$)}$$
    on $\mathbb{R}^4$, the Kodaira-Thurston four-fold is $K\coloneqq \mathbb{R}^4/\sim$. 
    It is a non-K\"ahler closed symplectic manifold equipped with the symplectic form
    \begin{align*}
       \omega_{K} = dx_1\wedge dx_2 + dx_3\wedge(dx_4+x_2dx_3).
    \end{align*}
    Now, we let $M = K\times\mathbb{S}^2$ and choose the symplectic form $\omega = \omega_K + \omega_{\mathbb{S}^2}$ on $M$ (Here, we omit the pullbacks by projections.). Let $\nabla h$ be the gradient vector field of the height function $h$ on $\mathbb{S}^2$ mentioned in Example \ref{weak replacement of the euler char}. Then, the vector field $$\dfrac{\partial}{\partial x_1}+\nabla h$$ is nondegenerate and has $2$ zero points. By Theorem \ref{another main theorem analogous to odd classical diff geo}, $k(M,\psi) = 0$. 

    On the other hand, using the de Rham cohomology \cite[Figure 5]{tanaka_tseng_2018} of $K$ together with the K\"unneth formula \cite[Section 5]{bott1982differential}, we find that for $M = K\times\mathbb{S}^2,$ 
    \begin{align*}
        b_0 = b_6 = 1, b_1 = b_5 = 3, b_2 = b_4 = 5, b_3 = 6,\\ 
    r_0 = r_2 = 1, r_1 = 2, r_3 = r_4 = r_5 = r_6 = 0. 
    \end{align*}
    Thus, by (\ref{the formula for b i psi}), for $M = K\times\mathbb{S}^2,$ 
    $$b_0^\psi+b_2^\psi+b_4^\psi+b_6^\psi+b_8^\psi = 1+5+5+6+3 = 20, $$
    meaning that $\ell(M,\psi) = 0$. 
\end{example}

\begin{remark}\label{counter example 2}
\normalfont
 We emphasize again that Theorem \ref{another main theorem analogous to odd classical diff geo} is not a solution to the $(4n+2)$-dimensional case of \cite[Question 1.1]{haozhuangsymplecticsemi1}. In fact, by \cite[(4.4)]{tangtsengclausensymplecticwitten}, for any genus $g$ closed surface $\Sigma_g$, the dimension of the even-degree part of its primitive cohomology is $1+2g$. Therefore, its symplectic semi-characteristic $k(\Sigma_g,\omega)$ is $1$, not the same as $\ell(\Sigma_g,\psi) = 0$. 
\end{remark}

\begin{remark}\normalfont
    Another interesting phenomenon is that the analogue of the classical Euler characteristic separates into the versions of ``the primitive cohomology for $\dim M = 4n$'' and ``the $1$-filtered cohomology for $\dim M = 4n+2$'' respectively. The mechanism behind this phenomenon is not clear, and we hope to find a way to unify the two versions. 
\end{remark}

\bibliographystyle{abbrv}
\bibliography{mybib.bib}
\end{document}